\documentclass[12pt,a4paper,oneside]{article}

\usepackage[left=2.5cm,right=2.5cm,top=2.5 cm,bottom=2.5cm]{geometry}

\usepackage[utf8]{inputenc}
\usepackage[T1]{fontenc}
\usepackage[english]{babel}
\usepackage{tikz}
\usepackage{pgfplots}
\pgfplotsset{compat=newest}
\usepgfplotslibrary{fillbetween}
\usepackage{hyperref}
\hypersetup{
    colorlinks = true,
    linkcolor = blue,
    anchorcolor = blue,
    citecolor = blue,
    filecolor = blue,
    urlcolor = blue
    }
\usetikzlibrary{decorations.markings,arrows}
\usepackage{breqn}
\usepackage{multirow}
\usepackage{amsmath}
\usepackage{longtable}
\usepackage{amsfonts}
\usepackage{amssymb}
\usepackage{graphicx}
\usepackage{amsthm}
\allowdisplaybreaks 
\usepackage{systeme}
\usepackage{float}
\usepackage{enumerate}
\usepackage{mathtools}
\usepackage{comment}
\usepackage{makecell} 
\usepackage{tcolorbox}
\usepackage{collectbox}
\makeatletter

\makeatother
\usepackage{etoolbox}
\let\bbordermatrix\bordermatrix
\patchcmd{\bbordermatrix}{\left(}{\left[}{}{}
\patchcmd{\bbordermatrix}{\right)}{\right]}{}{}
\setlength{\parindent}{0pt}
\setlength{\parskip}{12pt}

\newtheorem{theorem}{Theorem} 
 
\newtheorem{cor}{Corollary}
\newtheorem{remark}{Remark}

\numberwithin{equation}{section}

\newcolumntype{C}[1]{>{\centering\arraybackslash$}m{#1}<{$}}

\newcolumntype{R}[1]{>{\raggedleft\arraybackslash$}m{#1}<{$}}

\newcolumntype{L}[1]{>{\raggedright\arraybackslash$}m{#1}<{$}}

\DeclarePairedDelimiterX{\cif}[1]{(}{)}{\delimsize(#1\delimsize)}

\begin{document}

\title{Identities for full-history Horadam sequences}

\author{Tomislav Došlić\\
    Faculty of Civil Engineering, University of Zagreb\\
    Croatia, Zagreb 10000\\
  \text{tomislav.doslic@grad.unizg.hr}\\
 \and
Luka Podrug\\
    Faculty of Civil Engineering, University of Zagreb\\
    Croatia, Zagreb 10000\\
  \text{luka.podrug@grad.unizg.hr}}
\date{}
\maketitle

\begin{abstract}
We prove a master identity for a class of sequences defined by full-history
linear homogeneous recurrences with (non-negative) constant coefficients.
The identity is derived in a combinatorial way, providing thus combinatorial
proofs for many known and new identities obtained as its corollaries. In
particular, we prove several interesting identities for the Pell, the 
Jacobsthal, and the $m$-nacci numbers.
\end{abstract}

\section{Introduction}

The Fibonacci numbers are among the best known and the most researched
combinatorial sequences, as attested by the length and the number of links,
formulas, and references in the corresponding entry (A00045) in the {\em
The On-Line Encyclopedia of Integer Sequences} \cite{OEIS}. It can be argued
that they are the the simplest non-trivial example of a sequence defined by 
a linear recurrence with constant coefficients. Indeed, their defining 
recurrence, $F_{n} = F_{n-1} + F_{n-2}$, with the initial conditions
$F_0 = 0$, $F_1 = 1$, is the simplest interesting example of such recurrences,
with all shorter recurrences with constant coefficients leading to rather
dull geometric sequences. In spite of their definitional simplicity, the
Fibonacci numbers are, seemingly, inexhaustible source of interesting, and
often fascinating, combinatorial results. In particular, they satisfy a vast
number of identities. Literally thousands of them are scattered in the
literature and new ones are discovered and added almost daily.

The Fibonacci numbers, being both simple and interesting, have spawned a
large number of generalizations. Each of them, if at all meaningful, gives
rise to new classes of identities. One type of generalizations consists of 
varying the coefficients (and the initial conditions) of the defining
recurrence while keeping its length at two, leading to the study of the 
so-called Horadam sequences, first introduced by A. F. Horadam in the
early sixties \cite{Horadam0,Horadam1,Horadam2} and intensely studied ever
since. See, for example, Chapter 5 of ref. \cite{andrica}.
Another way to generality is to lengthen the
defining recurrence, leading to the tribonacci (A000073), the tetranacci
(A000078), and, in general, to the $m$-nacci sequences (A092921). Of course, the 
coefficients in the longer recurrences can also vary \cite{cerda}. In this paper we take 
this second type of generalization to its extreme, by taking into account
all previous values of the sequence and by considering the full-history
linear recurrences with constant coefficients. Our goal is to establish
interesting identities for such sequences.

Let  $ $ $m > 0$ be an integer and let $F^{(m)}_n$ denote the $n$-th $m$-nacci
number, i.e., the $n$-th
element of a sequence that satisfies the recursive relation
\begin{align}
F^{(m)}_n=F^{(m)}_{n-1}+\cdots+ F^{(m)}_{n-m} \label{m-nacci recurence}
\end{align}
with initial values $F^{(m)}_{0}=1$ and $F^{(m)}_{n}=0$ for $n<0$. From these
sequences one can obtain the usually indexed $m$-nacci numbers by shifting
indices by $m-1$, but in this paper we will use the unshifted indices to
simplify our expressions. 

In the first part of the paper, we consider a rectangular strip with
dimensions $1\times n$, i.e. the rectangle of height $1$ and length $n$,
and rectangular tiles with dimension $1\times k$, where $k$ is an integer
between $1$ and $m$. We are interested in tiling the strip using that set
of tiles. We denote the number of all possible such tilings by $g_{n}^{(m)}$.
This number is given as a special case of the next theorem by Benjamin
and Quinn \cite{PTRC}.
\begin{theorem} \label{benq}
Let $a_1, a_2, . . . a_m$ be nonnegative integers, and let $A_n$
be the sequence of numbers defined by the recurrence
$A_n=a_{1}A_{n-1}+\cdots+ a_{m}A_{n-m}$ with initial conditions
$A_0=1$ and $A_n=0$ for $n\leq 0$. Then the $n$-th element of this
sequence equals to the number of all possible colored tilings of a board
of length $n$, where the tiles are of length $k$ for $1\leq k\leq m$
and each tile of a length $k$ admits $a_k$ colors.
\end{theorem}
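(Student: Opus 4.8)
The plan is to argue by the standard combinatorial "count in two ways" / induction-via-last-tile technique. Let $T_n$ denote the number of colored tilings of a $1\times n$ board, where a tile of length $k$ (for $1\le k\le m$) comes in $a_k$ colors; set $T_0=1$ (the empty tiling of the empty board) and $T_n=0$ for $n<0$ since a board of negative length cannot be tiled. I will show $T_n$ satisfies exactly the recurrence and initial conditions that define $A_n$, whence $T_n=A_n$ for all $n$ by induction.

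First I would establish the base cases: $T_0=1=A_0$, and for $n<0$ both sides are $0$ by definition. For the inductive step, fix $n\ge 1$ and classify every colored tiling of the length-$n$ board according to the length $k$ of its \emph{last} (rightmost) tile. That last tile occupies cells $n-k+1,\dots,n$, can be colored in $a_k$ ways, and what remains to its left is an arbitrary colored tiling of a board of length $n-k$, of which there are $T_{n-k}$. Since $k$ ranges over $1,\dots,m$ and these cases are mutually exclusive and exhaustive, we get
\begin{equation*}
T_n=\sum_{k=1}^{m} a_k\,T_{n-k}=a_1T_{n-1}+a_2T_{n-2}+\cdots+a_mT_{n-m}.
\end{equation*}
By the induction hypothesis $T_{n-k}=A_{n-k}$ for each $k$ (note $n-k<n$, and the cases $n-k\le 0$ are covered by the base cases), so $T_n=\sum_{k=1}^m a_k A_{n-k}=A_n$. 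This closes the induction, and since $g_n^{(m)}$ is the special case $a_1=\cdots=a_m=1$, the claim about $g_n^{(m)}$ follows at once.

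Since the recurrence itself is what we are imposing on the combinatorial count, there is essentially no genuine obstacle here; the only point demanding a little care is the bookkeeping at the boundary, namely making sure the convention $T_n=0$ for $n<0$ is used consistently when the last tile has length $k>n$ (so that no "overhanging" tile is counted) and that the $n=0$ case is correctly read as the empty tiling rather than as having no tilings. One could alternatively phrase the whole argument via generating functions — the set of tilings decomposes as a sequence of tiles, giving $\sum_n T_n x^n = 1/\bigl(1-\sum_{k=1}^m a_k x^k\bigr)$, which is equivalent to the recurrence — but the last-tile decomposition is the cleanest and is the form that will be reused in the combinatorial proofs of the later identities.
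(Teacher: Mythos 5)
Your proof is correct: the last-tile decomposition combined with induction on $n$ is the standard argument, and your handling of the boundary conventions (reading the paper's ``$A_n=0$ for $n\le 0$'' as $n<0$ so as not to clash with $A_0=1$) is the right reading. Note that the paper itself offers no proof of this theorem --- it is quoted directly from Benjamin and Quinn \cite{PTRC} --- so there is nothing to compare against, but your argument is precisely the one in that source and is consistent with the tiling decompositions the paper uses later (e.g.\ in Theorem \ref{generalizedtheorem}).
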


Hence, for a special case of a previous theorem where $a_1=a_2=\cdots=a_m=1$
we have $g_n^{(m)}=F^{(m)}_n$. For $m=2$ we have the classical Fibonacci
numbers (A000045 in the OEIS \cite{OEIS}), for $m=3$ the tribonacci numbers
(A000073), and in general the $m$-nacci numbers. 

In the next part we take a step further and generalize the sequence
$F^{(m)}_{n}$ by defining a new sequence $S_n$ with recursive relation
\begin{align}
S_n=a_{n-1}S_{n-1}+\cdots+a_{1}S_{1} + a_0 S_0
\label{recursion S_n}\end{align}
with initial values 
\begin{align}
S_{0}=1 \text{ and } S_{k}=0 \text{ for }  k<0,
\label{initialvalues}\end{align}
where $n,a_1,a_2,...,a_n \geq 0$ are non-negative integers. If $a_0 \neq 0$
we say that $S_n$ satisfies a {\em full-history recurrence}.
By setting the first $k$ consecutive coefficients to 1 and the remaining
ones to zero, we obtain a sequence of sequences interpolating between
the constant sequence $S_n = 1^n$ for $a_1 = 1, a_k =0$ for $k > 1$, and the
geometric sequence $S_n = 2^n$ obtained if $a_k = 1$ for all $1 \leq k \leq n$.
The Fibonacci numbers are the first non-trivial case, and higher
$m$-nacci numbers are obtained by specifying 
$a_1 = \ldots = a_m = 1$ and $a_k = 0$ for $m < k \leq n$. By allowing other
non-negative integer values of the first $k$ consecutive coefficients
we obtain sequences that
generalize higher-order Fibonacci numbers in a way analogous to the way the
Horadam numbers generalize the classical ones. Hence we call the resulting
sequences the {\em full-history Horadam sequences}. In a similar way we can 
generalize also the sequences defined by recurrences with non-consecutive
non-zero coefficients such as, e.g., the Padovan, the Perrin, the Narayana's
cows sequence, and many others.

Again, we consider a rectangular board of dimension $1\times n$ and
rectangular tiles of dimension $1\times k$ where $k$ can vary from $1$ to $n$,
but now we allow that each type of tile can be colored in a given number of
colors. For the tile of length $k$ we denote the number of allowed colors by
$a_{k}$. The total number of such tilings is denoted by $g_n$. Note that we
do not need to use all tiles from the tile set; if we want to consider
tilings where some size of a tile is omitted, say $j$, we set $a_j=0$.
Again, by Theorem \ref{benq} we have that the number of
all possible tilings is $g_n=S_n$. 

In the following
sections we go beyond the $m$-nacci numbers and establish a master identity
for the sequence $S_n$ whose specializations will yield a number of
known and also of new identities for general sequences defined by homogeneous
linear recurrences with constant coefficients. In particular, we recover
some known, but also establish some new identities for several classical
sequences, including the Fibonacci, the Pell (A000129), the Jacobsthal
(A001045), the Padovan (A000931), the Narayana's cows sequence (A000930),
as well as the tribonacci and the tetranacci
sequences (A000073 and A000078, respectively). Similar work was done
recently by Dresden and Tulskikh \cite{DT1}, but they considered
convolutions involving two sequences. 


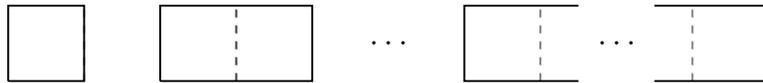
\begin{figure}[h!]
\centering

\begin{tikzpicture}[scale=1]
\coordinate (A) at (0,0);
\coordinate (B) at (1,0);
\coordinate (C) at (2,0);
\coordinate (D) at (3,0);
\coordinate (E) at (4,0);
\coordinate (F) at (5,0);
\coordinate (G) at (6,0);
\coordinate (H) at (7,0);
\coordinate (I) at (8,0);
\coordinate (J) at (9,0);

\coordinate (A1) at (0,1);
\coordinate (B1) at (1,1);
\coordinate (C1) at (2,1);
\coordinate (D1) at (3,1);
\coordinate (E1) at (4,1);
\coordinate (F1) at (5,1);
\coordinate (G1) at (6,1);
\coordinate (H1) at (7,1);
\coordinate (I1) at (8,1);
\coordinate (J1) at (9,1);

\draw [line width=0.25mm] (A)--(B)--(B1)--(A1)--(A); 
\draw [line width=0.25mm] (C)--(E)--(E1)--(C1)--(C); 
\draw [dashed,line width=0.25mm,opacity=0.5] (D)--(D1); 
\node[] () at (5,0.5) {$\cdots$};

\draw [line width=0.25mm] (7.5,0)--(G)--(G1)--(7.5,1); 
\draw [dashed,line width=0.25mm,opacity=0.5] (B)--(B1) (D)--(D1);
\node[] () at (8,0.5) {$\cdots$};
\draw [line width=0.25mm] (8.5,0)--(10,0)--(10,1)--(8.5,1); 
\draw [dashed,line width=0.25mm,opacity=0.5] (J)--(J1) (H)--(H1);
\end{tikzpicture}

\caption{All types of tiles. The leftmost tile is of length $1$ and the rightmost tile of length $m$.}
\label{grid1}
\end{figure}

\section{Combinatorial interpretation of a well-known identity}

As a warm-up, we first provide a combinatorial interpretation of a well-known
identity for the $m$-nacci numbers. The identity itself,
$$F_{n}^{(m)} + F_{n-m-1}^{(m)} = 2 F_{n-1}^{(m)},$$
follows directly from the defining recurrence, but we present a combinatorial 
proof in order to introduce the reader to concepts and to the reasoning
used in obtaining our main results.

It can be easily verified that the $m$-nacci numbers satisfy the identity
$F^{(m)}_n+F^{(m)}_{n-m-1}=2F^{(m)}_{n-1}$. It follows directly from the
recursive relation, 
\begin{align*}
F^{(m)}_n+F^{(m)}_{n-m-1}&=F^{(m)}_{n-1}+F^{(m)}_{n-2}+\cdots+F^{(m)}_{n-m}+F^{(m)}_{n-m-1}\\
&=2F^{(m)}_{n-1}.
\end{align*}

Here we provide a combinatorial interpretation.

\begin{theorem} For $n\geq m+1$, $$F^{(m)}_n+F^{(m)}_{n-m-1}=2F^{(m)}_{n-1}.$$ \label{BenjaminQiunnTheorem}\end{theorem}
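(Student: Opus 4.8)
The plan is to interpret all three terms as counts of tilings and produce an explicit bijection between the two sides. Recall from Theorem~\ref{benq} (with $a_1=\cdots=a_m=1$) that $F^{(m)}_n=g^{(m)}_n$ counts tilings of the $1\times n$ board by tiles of lengths $1,\dots,m$. So the left-hand side counts the disjoint union of tilings of a board of length $n$ and tilings of a board of length $n-m-1$, while the right-hand side counts two copies of the set of tilings of a board of length $n-1$. The task is therefore to biject $\mathcal{T}_n \sqcup \mathcal{T}_{n-m-1}$ with $\mathcal{T}_{n-1}\times\{0,1\}$, where $\mathcal{T}_k$ denotes the set of tilings of the length-$k$ board.

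First I would split $\mathcal{T}_n$ according to the last tile. If the last tile of a tiling of length $n$ has length $k$ with $1\le k\le m$, remove it to obtain a tiling of length $n-k$; running $k$ from $1$ to $m$ partitions $\mathcal{T}_n$ into blocks in natural bijection with $\mathcal{T}_{n-1},\mathcal{T}_{n-2},\dots,\mathcal{T}_{n-m}$. This recovers the recurrence combinatorially: $F^{(m)}_n=\sum_{k=1}^m F^{(m)}_{n-k}$. In exactly the same way, $F^{(m)}_{n-1}=\sum_{k=1}^{m}F^{(m)}_{n-1-k} = \sum_{j=2}^{m+1} F^{(m)}_{n-j}$ (again valid since $n\ge m+1$ so all shifted indices are at least $0$ or negative with value $0$). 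So the right-hand side, $2F^{(m)}_{n-1}$, counts two copies of $\mathcal{T}_{n-1}$, but one of those copies can be rewritten as $\mathcal{T}_{n-2}\sqcup\cdots\sqcup\mathcal{T}_{n-m-1}$.

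Putting the pieces together: the left-hand side is $\mathcal{T}_{n-1}\sqcup\mathcal{T}_{n-2}\sqcup\cdots\sqcup\mathcal{T}_{n-m}\sqcup\mathcal{T}_{n-m-1}$ (expanding $\mathcal{T}_n$ by last tile and appending the extra $\mathcal{T}_{n-m-1}$ term), and the right-hand side is $\mathcal{T}_{n-1}\sqcup\big(\mathcal{T}_{n-2}\sqcup\cdots\sqcup\mathcal{T}_{n-m-1}\big)$ (one untouched copy of $\mathcal{T}_{n-1}$ plus the expansion of the second copy by last tile). These two collections are literally the same disjoint union, so the identity follows; the bijection is: keep the distinguished copy of $\mathcal{T}_{n-1}$ fixed, and match a tiling of $\mathcal{T}_n$ whose last tile has length $k\ge 2$ with the tiling of $\mathcal{T}_{n-k}\subset$ (second copy) obtained by deleting that last tile, match a tiling of $\mathcal{T}_n$ whose last tile has length $1$ with the element of the first copy of $\mathcal{T}_{n-1}$ obtained by deleting it, and send the appended $\mathcal{T}_{n-m-1}$ to the $k=m+1$ block of the second copy.

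I expect the main (minor) obstacle to be bookkeeping the boundary indices: one must check that $n\ge m+1$ guarantees every board length appearing is nonnegative, or else that the convention $F^{(m)}_k=0$ for $k<0$ corresponds to the empty set of tilings, so that the set-theoretic equality is exact rather than merely numerical. Writing the argument cleanly as a single sentence ``both sides enumerate the same partitioned set'' is the real content; once the partition-by-last-tile observation is in place, there is nothing computational left.
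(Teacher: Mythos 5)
Your proposal is correct and is essentially the paper's own argument: the composite map you describe (delete the final $k$-mer from a length-$n$ tiling and re-identify the result with the block of length-$(n-1)$ tilings ending in a $(k-1)$-mer, send monomer-ended tilings to the untouched copy of $\mathcal{T}_{n-1}$, and send $\mathcal{T}_{n-m-1}$ to the $m$-mer-ended block) is precisely the bijection between $\mathcal{G}_n\cup\mathcal{G}_{n-m-1}$ and $\mathcal{G}_{n-1}\times\{0,1\}$ given in the paper, merely reorganized as an expansion of both sides by last tile.
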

\begin{proof} Let $\mathcal{G}_n$ denotes the set of all tilings of a
length-$n$ strip,  and let $\mathcal{T}_n^1, \mathcal{T}_n^2, \cdots
\mathcal{T}_n^m$ denote the tilings ending with a monomer, a dimer,...
or an $m$-mer, respectively. The cardinality of the set $\mathcal{G}_n$
is $g_n=F^{(m)}_{n}$. It is clear that
$\mathcal{G}_n=\mathcal{T}_n^1\cup\mathcal{T}_n^2\cup\cdots\cup\mathcal{T}_n^m$,
where all the sets $\mathcal{T}_n^i,1\geq i\geq m$ are disjoint. To prove
the theorem we have to establish a one-to-one correspondence between the sets
$\mathcal{G}_{n}\cup\mathcal{G}_{n-m-1}$ and $\mathcal{G}_{n-1}\times\left\lbrace0,1\right\rbrace$.

To each tiling from the set $\mathcal{G}_{n-1}$ we add a monomer at the end
to obtain an element of $\mathcal{T}_{n}^1$. Thus, we obtained bijection
between the sets $\mathcal{G}_{n-1}$ and $\mathcal{T}_{n}^1$. In this way,
we have used all the tilings of the set $\mathcal{G}_{n-1}$ once. Now we
take the tilings from the set $\mathcal{T}_{n-1}^m$, and remove the last
$m$-mer to obtain a tiling from a set $\mathcal{G}_{n-m-1}$. This shows
a one-to-one correspondence between those sets. For an arbitrary
$2\leq i \leq m$, we consider the sets $\mathcal{T}_{n}^i$ and
$\mathcal{T}_{n-1}^{i-1}$. Each tiling from $\mathcal{T}_{n}^i$ can be
obtained from a tiling in the set $\mathcal{T}_{n-1}^{i-1}$ as follows:
remove the last $(i-1)$-mer and replace it with an $i$-mer. 

In this way we have used each tiling of length $n-1$ twice and obtained
each tiling of length $n$ and each tiling of length $n-m-1$ exactly once.
A diagram that visualizes the described one-to-one correspondence between the
two sets is shown in Figure \ref{1-1 cor}.  

\begin{figure}[h!]\centering
\begin{tikzpicture}[scale=0.55]

\node[] (p1) at (0,0) {$\mathcal{G}_{n}$};
\node[] (p2) at (5,4) {$\mathcal{T}_{n}^1$};
\node[] (p3) at (5,2) {$\mathcal{T}_{n}^2$};
\node[] (p4) at (5,0) {$\vdots$};
\node[] (p6) at (5,-2) {$\mathcal{T}_{n}^{m}$};
\node[] (p7) at (0,-4) {$\mathcal{G}_{n-m-1}$};
\node[] (p8) at (15,4) {$\mathcal{G}_{n-1}$};
\node[] (p9) at (15,-2) {$\mathcal{G}_{n-1}$};
\node[] (p10) at (10,2) {$\mathcal{T}_{n-1}^1$};
\node[] (p12) at (10,0) {$\vdots$};
\node[] (p13) at (10,-2) {$\mathcal{T}_{n-1}^{m-1}$};
\node[] (p14) at (10,-4) {$\mathcal{T}_{n-1}^{m}$};

\draw [to-to](p2)--(p8);
\draw [to-to](p6)--(p13);
\draw [to-to] (p3)--(p10);
\draw [to-to] (p7)--(p14);
\draw (p1)--(p2) (p1)--(p3) (p1)--(p6) (p9)--(p10) (p9)--(p13) (p9)--(p14);

\end{tikzpicture}  
\caption{One-to-one correspondence between sets $\mathcal{G}_{n}\cup\mathcal{G}_{n-m-1}$ and $\mathcal{G}_{n-1}\times\left\lbrace0,1\right\rbrace$.} \label{1-1 cor}
\end{figure}
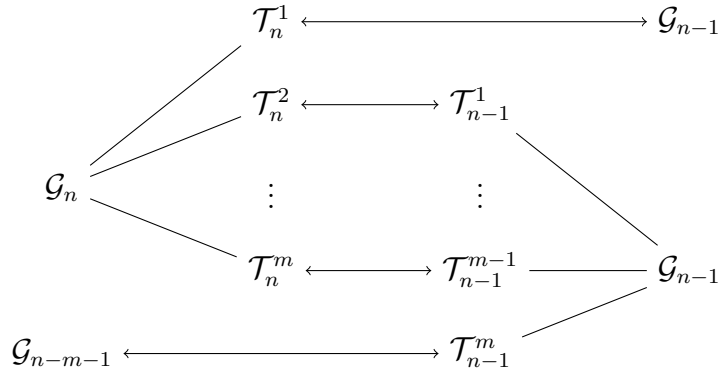 \end{proof}

\section{A master identity for a family of sequences}

Now we turn back to the sequence $S_n$ defined by recursive relation
\ref{recursion S_n}. As mentioned before, it counts the number of
tilings of a board of a length $n$ with tiles of
length $k$ where each of those tiles admits $a_{k}$ colors. 
 Let $\mathcal{T}$ denote the set of all tile lengths considered in a tiling,
i.e., $\mathcal{T}=\left\lbrace i\in [n]: a_{n-i}>0\right\rbrace$. For
example if we want to consider the Fibonacci sequence then
$\mathcal{T}=\left\lbrace 1,2 \right\rbrace$, but if we consider the Pell
numbers the set $\mathcal{T}$ is the same because we care only whether
the number $a_k$ is $0$ or not.

To proceed to our main theorem we need to introduce some new terms. For a
given tiled board, we say that a tiling is \textit{breakable} at position $m$,
where $m\neq 0, m\neq n$, if squares $m$ and $m+1$ (counting from the left) do
not belong to a same tile. That means that a given tiling can be broken into
two smaller properly tiled boards of lengths $m$ and $n-m$, respectively. In
the following example we present a rectangle of length $7$ and the tiles of
length $2$ or $3$. Given tiling can be broken at position $2$ and $5$.

\begin{figure}[h!]
\centering

\begin{tikzpicture}[scale=1]
\coordinate (A) at (0,0);
\coordinate (B) at (1,0);
\coordinate (C) at (2,0);
\coordinate (D) at (3,0);
\coordinate (E) at (4,0);
\coordinate (F) at (5,0);
\coordinate (G) at (6,0);
\coordinate (H) at (7,0);
\coordinate (I) at (8,0);
\coordinate (J) at (9,0);

\coordinate (A1) at (0,1);
\coordinate (B1) at (1,1);
\coordinate (C1) at (2,1);
\coordinate (D1) at (3,1);
\coordinate (E1) at (4,1);
\coordinate (F1) at (5,1);
\coordinate (G1) at (6,1);
\coordinate (H1) at (7,1);
\coordinate (I1) at (8,1);
\coordinate (J1) at (9,1);
 
\draw [line width=0.25mm] (A)--(H)--(H1)--(A1)--(A); 
\draw [dashed,line width=0.25mm,opacity=0.5](B)--(B1)(C)--(C1) (D)--(D1)(E)--(E1)(F)--(F1)(G)--(G1); 
\draw [line width=0.25mm] (A)--(C)--(C1)--(A1)--(A); 
\draw [line width=0.25mm] (C)--(F)--(F1)--(C1)--(C); 
\draw [line width=0.25mm] (F)--(H)--(H1)--(F1)--(F);

\end{tikzpicture}
\caption{Tiling is breakable only at positions $2$ and $5$.}
\label{grid1}
\end{figure}
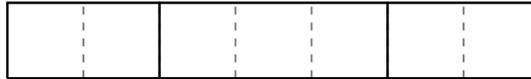 

In our next theorem we obtain an identity that differentiates tilings
based on their breakability.
\begin{theorem} For any integers $m,n\geq 1$ we have the identity \begin{equation}
S_{n+k}=S_{n}S_{k}+\sum\limits_{\substack{i\in \mathcal{T}\\i\neq 1}}a_{i}\sum\limits_{j=1}^{i-1}S_{n-j}S_{k-i+j}. \label{generalizedformula}
\end{equation} \label{generalizedtheorem} \end{theorem}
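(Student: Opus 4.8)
The plan is to prove \eqref{generalizedformula} combinatorially, reading off both sides as counts of colored tilings. By Theorem~\ref{benq} (in the form $g_n=S_n$ used in the previous section), the left-hand side $S_{n+k}$ is the number of colored tilings of a $1\times(n+k)$ board. Since $n,k\geq 1$, position $n$ is a genuine interior position of this board, so we may sort these tilings into two classes: those that are breakable at position $n$, and those that are not. The first step is to dispatch the breakable ones: a tiling breakable at $n$ splits uniquely into a colored tiling of a length-$n$ board followed by a colored tiling of a length-$k$ board, and this splitting is reversible; hence the breakable tilings are counted by $S_nS_k$, which is the first summand on the right.

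Next I would analyse the tilings that are \emph{not} breakable at $n$. In such a tiling there is a unique tile that covers both square $n$ and square $n+1$; let $i$ be its length. A monomer covers a single square, so necessarily $i\geq 2$, and since this tile actually occurs we have $a_i>0$, i.e.\ $i\in\mathcal{T}$. If the straddling tile starts at square $n-j+1$, then the requirement that it contains both $n$ and $n+1$ forces $1\leq j\leq i-1$. Removing it leaves a colored tiling of a board of length $n-j$ to its left and a colored tiling of a board of length $(n+k)-(n-j+i)=k-i+j$ to its right; conversely, a choice of $i\in\mathcal{T}$ with $i\neq 1$, of $j\in\{1,\dots,i-1\}$, of one of the $a_i$ colors for the straddling $i$-tile, and of tilings of the two flanking boards reconstitutes exactly one tiling of length $n+k$ that is not breakable at $n$. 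This bijection shows that the non-breakable tilings are counted by $\sum_{i\in\mathcal{T},\,i\neq 1}a_i\sum_{j=1}^{i-1}S_{n-j}S_{k-i+j}$, and adding the two classes yields the identity.

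The only point requiring care --- and the one I expect to be the main (minor) obstacle --- is the behaviour of the ``extremal'' summands, where the straddling tile would formally protrude past an end of the board. For $j$ too small the tile runs off the right end and $k-i+j<0$; for $j$ too large it runs off the left end and $n-j<0$. In either case the convention $S_\ell=0$ for $\ell<0$ from \eqref{initialvalues} makes the offending summand vanish, matching the fact that no tiling of length $n+k$ is associated with it; this is why the inner sum can be written uniformly over $1\leq j\leq i-1$ without case distinctions. As a consistency check, the substitution $j\mapsto i-j$ in the inner sum exchanges $S_{n-j}S_{k-i+j}$ with $S_{k-j}S_{n-i+j}$, exhibiting the expected symmetry of the right-hand side under $n\leftrightarrow k$.
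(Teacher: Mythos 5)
Your proposal is correct and follows essentially the same argument as the paper: classify tilings of the length-$(n+k)$ board by breakability at position $n$, count the breakable ones by $S_nS_k$, and count the rest by the length $i$ and offset $j$ of the unique straddling tile. Your extra remark that the convention $S_\ell=0$ for $\ell<0$ absorbs the summands where the straddling tile would protrude past an end of the board is a welcome clarification that the paper leaves implicit.
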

\begin{proof} There are $S_{n+k}$ tilings of a rectangle of length $n+k$.
We consider an arbitrary tiling from this set. If the tiling is breakable
at position $n$, we divide it into two tiled boards of lengths $n$ and $k$,
respectively. Hence, the total number
of tilings in this case is $S_{n}S_{k}$. If the tiling is not breakable
at position $n$, that means that the tile of length $i$ is blocking it for
some $i\in\mathcal{T}$. Note that there is only one type of tile which can
not block breaking, and that is the tile of length $1$. Hence, $i>1$. Every
such tile occupies $j-1$ positions left of $n$-th square and $i-j$ positions
right of the $n$-th square. The number of such tilings is
$a_{i}S_{n-j}S_{k-i+j}$ since the tile that blocks the breaking can be
colored in $a_i$ ways, the board left of the tile can be colored in
$S_{n-j}$ ways, and the board right in $S_{k-i+j}$ ways. The $n$-th square
can be first in the tile that blocks breaking but can not be last, otherwise
the tiling would be breakable at position $n$. Hence $j$ can vary from $1$
to $i-1$. The total number of tilings in this case is
$a_{i}\sum\limits_{j=1}^{i-1}S_{n-j}S_{k-i+j}$. Since $i$ can be any tile
from a set of tiles $\mathcal{T}$ except the tile of length $1$, we have
the total number
$\sum\limits_{i\in \mathcal{T}, i\neq 1}a_{i}\sum\limits_{j=1}^{i-1}S_{n-j}S_{k-i+j}$. Now we sum up all the cases,
\begin{align*}
S_{n+k}&=S_{n}S_{k}+\sum\limits_{\substack{i\in \mathcal{T}\\i\neq 1}}a_{i}\sum\limits_{j=1}^{i-1}S_{n-j}S_{k-i+j},
\end{align*} which concludes our proof.

\end{proof}  

\section{Some identities obtained from Theorem \ref{generalizedtheorem}}

In this section we demonstrate how Theorem \ref{generalizedtheorem} provides
not only tools for proving some well-known identities, but also for deriving
some new ones. We obtain sequences from the recursive relation \ref{recursion S_n} by defining numbers $a_k$ and by shifting indices, if needed.

As stated in book \textit{Fibonacci and Lucas Numbers with Applications} \cite{Koshy}, next well-known identities that concern Fibonacci numbers were first proven by Lucas in 1876 and Mana in 1969, but here we derive them as a corollary of Theorem \ref{generalizedtheorem}.   
\begin{cor}[Fibonacci numbers] For the Fibonacci numbers we have the following
identities:  $F_{n+k}=F_{n+1}F_{k}+F_{n}F_{k-1}$, $F_{2n+1}=F_{n+1}^2+F_n^2$ and $F_{2n}=(F_{n+1}+F_{n-1})F_{n}$. \label{fibonacci} \end{cor}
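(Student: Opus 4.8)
The plan is to read off all three identities from Theorem~\ref{generalizedtheorem} after specializing it to the Fibonacci tile set. First I would observe that the Fibonacci numbers correspond to $\mathcal{T}=\{1,2\}$ with $a_1=a_2=1$; by Theorem~\ref{benq} this produces the sequence with $S_0=1$, $S_1=1$, $S_2=2$, $S_3=3,\dots$, so that $S_n=F_{n+1}$ under the classical convention $F_1=F_2=1$. In the sum in (\ref{generalizedformula}) over $i\in\mathcal{T}$ with $i\neq 1$ only the term $i=2$ survives, and its inner sum over $j$ reduces to the single index $j=1$; hence the master identity degenerates to
\[
S_{n+k}=S_nS_k+S_{n-1}S_{k-1}.
\]
Rewriting this with $S_n=F_{n+1}$ gives $F_{n+k+1}=F_{n+1}F_{k+1}+F_nF_k$, and the substitution $k\mapsto k-1$ yields the first asserted identity $F_{n+k}=F_{n+1}F_k+F_nF_{k-1}$.

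The remaining two identities then follow immediately by specializing the first. Setting $k=n+1$ gives $F_{2n+1}=F_{n+1}F_{n+1}+F_nF_n=F_{n+1}^2+F_n^2$, and setting $k=n$ gives $F_{2n}=F_{n+1}F_n+F_nF_{n-1}=(F_{n+1}+F_{n-1})F_n$.

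The only point needing care is the bookkeeping of the index shift between the unshifted tiling sequence $S_n$ and the classically indexed $F_n$, together with a routine check that the various substitutions keep the arguments in the range where Theorem~\ref{generalizedtheorem} applies (the boundary values, such as $F_0=0$ and $F_{-1}=1$, are consistent with the recurrence and the stated initial conditions, so the identities in fact persist past the generic range). I do not expect a genuine obstacle here: the corollary amounts to translating the single identity (\ref{generalizedformula}) into Fibonacci notation, and the same template---identify the tile set, collapse the double sum, shift indices, and then put $k=n$ or $k=n+1$---will serve verbatim for the Pell, Jacobsthal, and $m$-nacci corollaries that follow.
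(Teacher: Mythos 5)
Your proposal is correct and follows essentially the same route as the paper: specialize the master identity to $\mathcal{T}=\{1,2\}$ so that only the $i=2$, $j=1$ term survives, translate $S_n=F_{n+1}$, shift $k\mapsto k-1$, and then specialize the indices to get the two squared-form identities. The only cosmetic difference is that you obtain $F_{2n+1}$ by setting $k=n+1$ in the shifted identity, whereas the paper sets $n=k$ before the shift; these are trivially equivalent.
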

\begin{proof}
Since the recursive
relation for Fibonacci numbers is $F_n=F_{n-1}+F_{n-2}$, we set
$a_{n-1}=a_{n-2}=1$ and $a_{k}=0$ otherwise. Thus we have
$\mathcal{T}=\left\lbrace 1,2\right\rbrace$. Now the recursive relation
\ref{recursion S_n} reduces to the recurrence for Fibonacci numbers; the
initial values \ref{initialvalues} need some adjusting which we will do later.
By Theorem \ref{generalizedtheorem} we have:
\begin{align*}
S_{n+k}&=S_{n}S_{k}+\sum\limits_{\substack{i\in \left\lbrace 1,2\right\rbrace\\i\neq 1}}\sum\limits_{j=1}^{i-1}S_{n-j}S_{k-i+j}\\
&=S_{n}S_{k}+S_{n-1}S_{k-1}.
\end{align*}
Since it is usual to set $F_0=0$ for Fibonacci numbers, we shift the index in the array $S_n$
by $1$ to obtain $F_n$. Thus, we have $F_{n+k+1}=F_{n+1}F_{k+1}+F_{n}F_{k}$ and, after replacing $k$
with $k-1$, we obtain a more suitable expression \begin{align}
F_{n+k}=F_{n+1}F_{k}+F_{n}F_{k-1}.
\end{align}
Now by setting $n=k$ we obtain two another famous identities,
\begin{align*}F_{2n+1}&=F_{n+1}^2+F_{n}^2 \end{align*} and \begin{align*}F_{2n}=(F_{n+1}+F_{n-1})F_{n}.\end{align*}
\end{proof}

The Pell numbers are another well-known sequence defined by recursive relation.
Their defining recurrence is $$P_n=2P_{n-1}+P_{n-2},$$ with the initial
values $P_0=0$ and $P_1=1$. 

\begin{cor}[Pell numbers] \label{pell} For Pell numbers we have same identities as in Corollary \ref{fibonacci}. \end{cor}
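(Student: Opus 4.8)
The plan is to run exactly the same argument as in the proof of Corollary~\ref{fibonacci}, the point being that passing from the Fibonacci to the Pell recurrence only changes how many colors a monomer may receive, and monomers play no role in the correction term of the master identity. So first I would record the Pell recurrence $P_n = 2P_{n-1} + P_{n-2}$ in the language of Theorem~\ref{benq}: a length-$1$ tile admits $2$ colors, a length-$2$ tile admits $1$ color, and all longer tiles are forbidden. As observed in the discussion of the Pell numbers preceding this corollary, this gives the same tile-length set $\mathcal{T} = \{1,2\}$ as in the Fibonacci case, since only the vanishing or non-vanishing of the coefficients matters for $\mathcal{T}$.

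Next I would invoke Theorem~\ref{generalizedtheorem}. In the sum $\sum_{i \in \mathcal{T},\, i \neq 1} a_i \sum_{j=1}^{i-1} S_{n-j} S_{k-i+j}$ the only surviving index is $i = 2$, carrying the coefficient $a_2 = 1$; the coefficient $a_1 = 2$ attached to monomers does not appear, precisely because a monomer can never block a break. Hence
\begin{align*}
S_{n+k} = S_n S_k + a_2\, S_{n-1} S_{k-1} = S_n S_k + S_{n-1} S_{k-1},
\end{align*}
which is literally the identity obtained for the $S_n$-sequence in the Fibonacci case.

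It then remains to translate back to the standard indexing. The tiling numbers satisfy $S_0 = 1$, $S_1 = 2$, and $S_n = 2S_{n-1} + S_{n-2}$, so comparison with $P_0 = 0$, $P_1 = 1$ gives $S_n = P_{n+1}$. Substituting this into the displayed identity and replacing $k$ by $k-1$ yields $P_{n+k} = P_{n+1}P_k + P_n P_{k-1}$; setting $n=k$ here gives $P_{2n} = (P_{n+1}+P_{n-1})P_n$, and setting $n=k$ in the un-shifted form $P_{n+k+1} = P_{n+1}P_{k+1} + P_n P_k$ gives $P_{2n+1} = P_{n+1}^2 + P_n^2$, which are exactly the identities of Corollary~\ref{fibonacci}. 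There is no genuine obstacle in this argument; the only things to watch are the index shift in the final step and, conceptually, the (by now routine) observation that enlarging the number of monomer colors leaves the correction sum in \eqref{generalizedformula} untouched, which is why the Pell numbers inherit the Fibonacci identities verbatim.
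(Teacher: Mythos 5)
Your proposal is correct and follows essentially the same route as the paper: set $a_1=2$, $a_2=1$, note that the correction sum in \eqref{generalizedformula} is independent of $a_1$, and transfer the Fibonacci identities via the shift $S_n=P_{n+1}$. The paper's own proof is just a terser version of this same observation.
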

\begin{proof} Since the Pell numbers satisfy relation
$P_n=2P_{n-1}+P_{n-2}$, we derive new identities for Pell numbers from
recursion \ref{recursion S_n} by setting $a_{1}=2$ and $a_{2}=1$. Since
formula \ref{generalizedformula} does not depend on number $a_1$, all 
identities for Fibonacci numbers are also valid for Pell numbers. Hence, \begin{align*}
P_{n+k}&=P_{n+1}P_{k}+P_{n}P_{k-1},\\ 
P_{2n+1}&=P_{n+1}^2+P_{n}^2,\\ 
P_{2n}&=(P_{n+1}+P_{n-1})P_{n}
\end{align*}  
\end{proof}
As mentioned before, since the identity \ref{generalizedformula} does not
depend on the number $a_1$, all Fibonacci and Pell identities from these
corollaries  can be extended to all sequences $S_n$ that satisfy recursion
$$S_n=aS_{n-1}+S_{n-2},\quad S_{0}=0, S_{1}=1.$$ 

The Jacobsthal numbers $J_n$ are defined by recursion \begin{align*}
J_n=J_{n-1}+2J_{n-2} \label{Jacon recursion} \end{align*} with the initial
values $J_0=0$ and $J_1=1$. The $n$-th element of this sequence can be expressed as the nearest integer of $\frac{2^n}{3}$.
   
\begin{cor}[Jacobsthal numbers] Jacobsthal numbers satisfy the following
identities: $J_{n+k}=J_{n+1}J_{k}+2J_{n}J_{k-1}$, $J_{2n}=(J_{n+1}+2J_{n-1})J_{n}$ and $J_{2n+1}=J_{n+1}^2+2J_{n}^2$ . \label{Jacob}\end{cor}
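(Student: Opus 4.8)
The plan is to follow exactly the route used for the Fibonacci and the Pell numbers in Corollaries \ref{fibonacci} and \ref{pell}, since the Jacobsthal recurrence $J_n = J_{n-1} + 2J_{n-2}$ again involves only tiles of lengths $1$ and $2$. First I would apply Theorem \ref{generalizedtheorem} with $a_1 = 1$, $a_2 = 2$, and $a_k = 0$ for $k > 2$, so that $\mathcal{T} = \left\lbrace 1, 2 \right\rbrace$ and the sequence $S_n$ defined by recursion \ref{recursion S_n} obeys precisely the Jacobsthal recurrence. In the master identity \ref{generalizedformula} the outer sum then has a single term, $i = 2$, whose inner sum has a single term, $j = 1$; hence the identity collapses to $S_{n+k} = S_n S_k + 2 S_{n-1} S_{k-1}$.

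Next comes the index adjustment. Since the usual convention is $J_0 = 0$ and $J_1 = 1$, whereas \ref{initialvalues} forces $S_0 = 1$, I would put $J_n = S_{n-1}$, i.e. $S_m = J_{m+1}$. Substituting into the previous line gives $J_{n+k+1} = J_{n+1}J_{k+1} + 2J_n J_k$, and replacing $k$ by $k-1$ yields the first claimed identity $J_{n+k} = J_{n+1}J_k + 2J_n J_{k-1}$. Specializing $k = n+1$ then gives $J_{2n+1} = J_{n+1}^2 + 2J_n^2$, and specializing $k = n$ gives $J_{2n} = J_{n+1}J_n + 2J_n J_{n-1} = (J_{n+1} + 2J_{n-1})J_n$, which are the remaining two identities.

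The main point to be careful about — rather than a genuine obstacle — is the bookkeeping of the shift: one must keep the ``$+1$'' offsets consistent and check that when $n$ or $k$ is small the symbols $S_{n-1}$, $S_{k-1}$ are read off the initial conditions \ref{initialvalues} rather than treated as undefined. The only substantive difference from the Fibonacci and Pell cases is that the cross-term now carries the coefficient $2 = a_2$; this is precisely because identity \ref{generalizedformula}, unlike its behaviour with respect to $a_1$, does retain the coefficients $a_i$ for $i \neq 1$.
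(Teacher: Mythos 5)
Your proposal is correct and follows essentially the same route as the paper: the same specialization $a_1=1$, $a_2=2$ of Theorem \ref{generalizedtheorem}, the same collapse to $S_{n+k}=S_nS_k+2S_{n-1}S_{k-1}$, the same shift $S_m=J_{m+1}$, and the same specializations for $J_{2n}$ and $J_{2n+1}$. In fact your intermediate line $J_{n+k+1}=J_{n+1}J_{k+1}+2J_nJ_k$ is more careful than the paper's, which drops the factor $2$ in that step (evidently a typo, since the final identities agree).
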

\begin{proof}

To obtain recursive relation \ref{Jacon recursion}, we set $a_{n-1}=1$, $a_{n-2}=2$ and $a_{k}=0$ otherwise. Again, $\mathcal{T}=\left\lbrace 1,2\right\rbrace$. By Theorem \ref{generalizedtheorem} we have:
\begin{align*}
S_{n+k}&=S_{n}S_{k}+\sum\limits_{\substack{i\in \left\lbrace 1,2\right\rbrace\\i\neq 1}}2\sum\limits_{j=1}^{i-1}S_{n-j}S_{k-i+j}\\
&=S_{n}S_{k}+2S_{n-1}S_{k-1}.
\end{align*}
Because $S_n=J_{n+1}$, we shift indices by $1$ to have $J_{n+k+1}=J_{n+1}J_{k+1}+J_{n}J_{k}$
and, after replacing $k$ with  $k-1$, we obtain \begin{align}
J_{n+k}=J_{n+1}J_{k}+2J_{n}J_{k-1}.
\end{align}
For $n=k$ we have \begin{align*}J_{2n+1}&=J_{n+1}^2+2J_{n}^2 \end{align*} and \begin{align*}J_{2n}=(J_{n+1}+2J_{n-1})J_{n}.\end{align*}
\end{proof}

The result from Corollary \ref{Jacob} can be generalized for all Horadam sequences defined with recursion $$S_n=aS_{n-1}+bS_{n-2},\quad S_{0}=0, S_{1}=1,$$ for arbitrary integers $a$ and $b$. One can easily verify that   \begin{align*}
S_{n+k}=S_{n+1}S_{k}+bS_{n}S_{k-1}.
\end{align*}
  
Now we apply our master identity to sequences with longer defining recurrences.
We start with two sequences defined by recurrences of length three.
The first one, known as the Narayana's cows sequence $N_n$ is defined by the
recurrence
\begin{align*}
N_n=N_{n-1}+N_{n-3}, 
\end{align*} with the initial values $N_0=N_1=N_2=1$. It appears as
A000930 in \cite{OEIS}.

\begin{cor}[Narayana's cows sequence] For the Narayana's cows sequence
we have the
following identity  
$$N_{n+k}=N_{n}N_{k}+N_{n-1}N_{k-2}+N_{n-2}N_{k-1}.$$  \label{narayanacor}\end{cor}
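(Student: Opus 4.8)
The plan is to obtain the identity for the Narayana's cows sequence as a direct specialization of the master identity \eqref{generalizedformula} in Theorem \ref{generalizedtheorem}, exactly as was done for the Fibonacci, Pell, and Jacobsthal numbers. First I would encode the Narayana recurrence $N_n = N_{n-1} + N_{n-3}$ in the form of the full-history recurrence \eqref{recursion S_n} by setting $a_{n-1} = 1$, $a_{n-3} = 1$, and $a_k = 0$ for all other $k$. This gives the tile set $\mathcal{T} = \{1,3\}$, since tiles of length $1$ and $3$ are allowed while tiles of length $2$ are forbidden ($a_{n-2}=0$). The associated sequence $S_n$ from \eqref{recursion S_n}–\eqref{initialvalues} then satisfies $S_n = S_{n-1} + S_{n-3}$ with $S_0 = 1$ and $S_k = 0$ for $k<0$; checking small values ($S_0=1,\ S_1=1,\ S_2=1,\ S_3=2,\dots$) shows that $S_n = N_n$ with no index shift needed, since the stated initial values $N_0=N_1=N_2=1$ match.

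Next I would substitute $\mathcal{T} = \{1,3\}$ into \eqref{generalizedformula}. The outer sum runs over $i \in \mathcal{T}$ with $i \neq 1$, so only the term $i = 3$ survives, contributing $a_3 \sum_{j=1}^{2} S_{n-j} S_{k-3+j} = S_{n-1}S_{k-2} + S_{n-2}S_{k-1}$ (using $a_3 = 1$). Combined with the leading term $S_n S_k$, this yields
\begin{equation*}
S_{n+k} = S_n S_k + S_{n-1}S_{k-2} + S_{n-2}S_{k-1},
\end{equation*}
and rewriting $S$ as $N$ gives precisely the claimed identity $N_{n+k} = N_n N_k + N_{n-1}N_{k-2} + N_{n-2}N_{k-1}$.

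The only genuine point requiring a moment's care — and what I would treat as the main (though minor) obstacle — is the bookkeeping of initial conditions: one must confirm that the sequence $S_n$ produced by \eqref{recursion S_n} with these coefficients really agrees with $N_n$ as conventionally indexed, rather than being an index-shifted copy (as happened in the Jacobsthal case, where $S_n = J_{n+1}$). For Narayana this check is clean because the convention $N_0 = N_1 = N_2 = 1$ coincides with the values forced by $S_0 = 1$ and the recurrence together with $S_k = 0$ for $k < 0$. I would also note in passing that, as with the earlier corollaries, the formula does not depend on the value of $a_1$, so the same identity holds for any sequence satisfying $T_n = a\,T_{n-1} + T_{n-3}$ with the matching initial values. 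A one-line verification for small $n,k$ (e.g. $n=k=2$: $N_4 = 4$ and $N_2 N_2 + N_1 N_0 + N_0 N_1 = 1 + 1 + 1 = 3$ — which would flag a shift issue) should be mentioned or silently resolved; in fact $N_4 = N_3 + N_1 = 2 + 1 = 3$, so the identity checks out and no shift is needed.
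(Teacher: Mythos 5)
Your proposal is correct and follows essentially the same route as the paper: specialize the master identity with $\mathcal{T}=\{1,3\}$, note that $S_n=N_n$ with no index shift, and read off the single surviving term $i=3$ of the outer sum. The extra bookkeeping you do on initial conditions (and the small-case sanity check, once corrected to $N_4=3$) is consistent with the paper's one-line observation that $S_n=N_n$.
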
 \vspace{-1cm}
\begin{proof} We obtain the recursive relation for the Narayana's cows sequence from recursion
\ref{recursion S_n} by setting $a_{n-1}=a_{n-3}=1$ and $a_k=0$ otherwise. 
Since $\mathcal{T}=\left\lbrace 1,3\right\rbrace$ and $S_n=N_n$ we have
\begin{align*}
N_{n+k}&=N_{n}N_{k}+\sum\limits_{i=3}^{3}\sum\limits_{j=1}^{i-1}N_{n-j}N_{k-i+j}\\
&=N_{n}N_{k}+\sum\limits_{j=1}^{2}N_{n-j}N_{k+j-3}\\
&=N_{n}N_{k}+N_{n-1}N_{k-2}+N_{n-2}N_{k-1}
\end{align*}
That concludes
our proof.
\end{proof}

\begin{remark} By setting $n=k$ and shifting indices if needed,
Corollary \ref{narayanacor} allows us to obtain two more identities for
the Narayana's cows sequence: \begin{align*}
N_{2n}&=N_{n}^2+2N_{n-1}N_{n-2}\\
N_{2n+1}&=N_{n-1}^2+N_n\left(N_{n-2}+N_{n+1}\right).
\end{align*}
\end{remark}

Our next identity will we about the Padovan numbers, sequence A000931 in
\cite{OEIS}. Its defining recurrence is given by 
\begin{align*}
P_n=P_{n-2}+P_{n-3},\end{align*}
with the initial values $ P_0=1,P_1=P_2=0$.

\begin{cor}[Padovan sequence] For the Padovan sequence we have following identity \begin{align*}P_{n+k}&=P_{n+1}P_{k+2}+P_{n+2}P_{k+1}+P_{n}P_{k}.
\label{padovanidentity}\end{align*}

 \label{padovancor}\end{cor}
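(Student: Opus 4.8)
The plan is to apply Theorem \ref{generalizedtheorem} directly to the Padovan recurrence and then shift indices to land on the standard Padovan indexing. First I would set $a_{n-2}=a_{n-3}=1$ and $a_k=0$ otherwise, so that the recursion \ref{recursion S_n} becomes $S_n=S_{n-2}+S_{n-3}$, which is exactly the Padovan recurrence, and the relevant tile-length set is $\mathcal{T}=\{2,3\}$. Note that here the sole $1$-tile is \emph{not} in $\mathcal{T}$, so the ``$i\neq 1$'' restriction in \eqref{generalizedformula} is vacuous and both $i=2$ and $i=3$ contribute to the sum.

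Next I would write out the master identity for this choice: since $a_2=a_3=1$,
\begin{align*}
S_{n+k}&=S_nS_k+\sum_{i\in\{2,3\}}\;\sum_{j=1}^{i-1}S_{n-j}S_{k-i+j}\\
&=S_nS_k+S_{n-1}S_{k-1}+S_{n-1}S_{k-2}+S_{n-2}S_{k-1}.
\end{align*}
At this point the sequence $S_n$ already satisfies the Padovan recurrence, but it carries the initial data $S_0=1$, $S_k=0$ for $k<0$ coming from \ref{initialvalues}; one checks that with these conventions $S_n$ agrees with the Padovan sequence $P_n$ (indeed $S_0=P_0=1$, $S_1=0=P_1$, $S_2=1$\dots). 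The remaining issue is purely bookkeeping: the target identity is stated with the shifted arguments $P_{n+1},P_{n+2},P_{k+1},P_{k+2}$ rather than $P_{n-1},P_{n-2}$, so I would use the Padovan recurrence itself, in the form $P_{m}=P_{m+3}-P_{m+1}$ or equivalently $P_{m-1}+P_{m-2}=P_{m+1}$ and similar three-term rearrangements, to rewrite the three ``mixed'' products $S_{n-1}S_{k-1}+S_{n-1}S_{k-2}+S_{n-2}S_{k-1}$ in terms of $P_{n+1}P_{k+2}+P_{n+2}P_{k+1}+P_nP_k$, and then combine with the leading $S_nS_k$ term.

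The main obstacle, such as it is, will be the index-shifting algebra at the end: unlike the Fibonacci and Jacobsthal cases, where the shift was a single global $+1$, here the stated identity mixes $+1$ and $+2$ shifts on the two sides of each product, so one cannot simply relabel $n\mapsto n+c$, $k\mapsto k+c$ uniformly. Instead I would expand $P_{n+1}P_{k+2}+P_{n+2}P_{k+1}$ using $P_{n+2}=P_n+P_{n-1}$, $P_{n+1}=P_{n-1}+P_{n-2}$ (and likewise in $k$), collect terms, and verify that the resulting expression equals $S_{n+k}-P_nP_k$ as given by the displayed master identity above; one should also keep an eye on small values of $n,k$ to make sure the boundary conventions $P_{<0}=0$ do not spoil the manipulation. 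This is a finite, mechanical verification, so no genuine difficulty is expected.
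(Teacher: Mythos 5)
Your setup follows the paper's route exactly up to the application of the master identity: you take $a_{n-2}=a_{n-3}=1$, get $\mathcal{T}=\{2,3\}$, and correctly obtain $S_{n+k}=S_nS_k+S_{n-1}S_{k-1}+S_{n-1}S_{k-2}+S_{n-2}S_{k-1}$. The genuine gap is your identification $S_n=P_n$. With the conventions of \ref{initialvalues} the sequence runs $S_0=1$, $S_1=0$, $S_2=1$, $S_3=1$, $S_4=1$, $S_5=2,\dots$, while the Padovan sequence as defined in the paper has $P_0=1$, $P_1=P_2=0$, $P_3=1$, $P_4=0$, $P_5=1,\dots$; so $S_2=1\neq 0=P_2$ and the correct relation is $S_n=P_{n+3}$ (your spot check stops exactly one term before it fails). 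This is not a harmless bookkeeping slip: if $S_n=P_n$ held, the master identity would assert $P_{n+k}=P_nP_k+P_{n-1}P_{k-1}+P_{n-1}P_{k-2}+P_{n-2}P_{k-1}$, which is simply false --- at $n=k=5$ the right-hand side equals $1$ while $P_{10}=3$ --- so the ``finite, mechanical verification'' you defer to at the end cannot succeed: no rearrangement via the three-term recurrence will turn a false identity into the true target one.

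The paper's proof instead substitutes $S_n=P_{n+3}$ to get $P_{n+k+3}=P_{n+3}P_{k+3}+P_{n+2}P_{k+2}+P_{n+2}P_{k+1}+P_{n+1}P_{k+2}$, then replaces $n$ by $n-2$ and $k$ by $k-1$ (note the \emph{asymmetric} shift, which you correctly anticipated would be the delicate point), and finally uses the recurrence once in the form $P_n+P_{n-1}=P_{n+2}$ to merge two terms into $P_{n+2}P_{k+1}$. The fix for your argument is therefore to carry the $+3$ index shift from the outset rather than to equate $S_n$ with $P_n$; the rest of your plan then goes through essentially as the paper does it.
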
 \vspace{-1cm}
\begin{proof}
Recursive relation for Padovan numbers is obtained from recursion
\ref{recursion S_n} by setting $a_{n-2}=a_{n-3}=1$ and $a_k=0$ otherwise.
Hence, $\mathcal{T}=\left\lbrace 2,3\right\rbrace$ and from Theorem
\ref{generalizedtheorem} it follows:\begin{align*}
S_{n+k}&=S_{n}S_{k}+\sum\limits_{i=2}^{3}\sum\limits_{j=1}^{i-1}S_{n-j}S_{k-i+j}\\
&=S_{n}S_{k}+S_{n-1}S_{k-1}+S_{n-1}S_{k-2}+S_{n-2}S_{k-1}
\end{align*}
Since $S_n=P_{n+3}$, we have
$$P_{n+k+3}=P_{n+3}P_{k+3}+P_{n+2}P_{k+2}+P_{n+2}P_{k+1}+P_{n+1}P_{k+2},$$
and by replacing $n$ and $k$ with $n-2$ and $k-1$ we finally arrive to \begin{align*}
P_{n+k}&=P_{n+1}P_{k+2}+\left(P_n+P_{n-1}\right)P_{k+1}+P_{n}P_{k}\\
&=P_{n+1}P_{k+2}+P_{n+2}P_{k+1}+P_{n}P_{k}.
\end{align*}
\end{proof}

\begin{remark} Similar as before, Corollary \ref{padovancor} allows us to write two more identities: \begin{align*}
P_{2n}&=P_{n}^2+2P_{n+1}P_{n+2}\\
P_{2n+1}&=P_{n+2}^2+P_{n+1}P_{n+3}+P_nP_{n+1}.
\end{align*}
\end{remark}

Although the list of possibilities is vast, we will conclude this list of
examples with finite history recursion with the tribonacci and tetranacci
numbers, denoted by $T_n$ and $Q_n$, respectively. Their respective defining 
recurrence relations are \begin{align*}
T_n=T_{n-1}+T_{n-2}+T_{n-3}
\end{align*} and \begin{align*}
Q_n=Q_{n-1}+Q_{n-2}+Q_{n-3}+Q_{n-4}
\end{align*}
with initial values $T_0=T_1=0$, $T_2=1$ and $Q_0=Q_1=Q_2=0$, $Q_3=1$.

The identity for tribonacci numbers was first established by other means
in our recent paper \cite{DP}, but here we show how can it be derived from
our main result. 

\begin{cor}[Tribonacci numbers] Tribonacci numbers satisfy the identity
$$T_{n+k}=T_{n+1}T_{k+1}+T_{n}T_{k}+T_{n}T_{k-1}+T_{n-1}T_{k}.$$ \end{cor}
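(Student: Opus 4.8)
The plan is to follow exactly the recipe used for the Narayana, Padovan, and earlier sequences: realize the tribonacci sequence as a reindexing of the sequence $S_n$ coming from recursion \ref{recursion S_n} for a suitable choice of coefficients, apply Theorem \ref{generalizedtheorem}, and then translate the resulting identity back into the standard $T_n$-indexing.

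First I would set $a_{n-1}=a_{n-2}=a_{n-3}=1$ and $a_k=0$ otherwise, so that \ref{recursion S_n} becomes $S_n=S_{n-1}+S_{n-2}+S_{n-3}$ with $\mathcal{T}=\left\lbrace 1,2,3\right\rbrace$. Computing the first few terms $S_0=1,\ S_1=1,\ S_2=2,\ S_3=4,\dots$ and comparing with $T_0=T_1=0,\ T_2=1,\ T_3=1,\ T_4=2,\dots$ shows that $S_n=T_{n+2}$ for all $n\geq 0$; the convention $S_k=0$ for $k<0$ is consistent with $T_{k+2}=0$ for $-2\leq k+2\leq 1$, which is what makes the boundary terms of the master formula behave correctly.

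Next I would substitute $\mathcal{T}=\left\lbrace 1,2,3\right\rbrace$ and $a_2=a_3=1$ into \ref{generalizedformula}. The $i=2$ summand contributes the single term $S_{n-1}S_{k-1}$, while the $i=3$ summand contributes $S_{n-1}S_{k-2}+S_{n-2}S_{k-1}$, so that
\begin{align*}
S_{n+k}&=S_nS_k+S_{n-1}S_{k-1}+S_{n-1}S_{k-2}+S_{n-2}S_{k-1}.
\end{align*}
Replacing $S_m$ by $T_{m+2}$ turns this into $T_{n+k+2}=T_{n+2}T_{k+2}+T_{n+1}T_{k+1}+T_{n+1}T_k+T_nT_{k+1}$, and then substituting $n\mapsto n-1$, $k\mapsto k-1$ yields precisely $T_{n+k}=T_{n+1}T_{k+1}+T_nT_k+T_nT_{k-1}+T_{n-1}T_k$, as claimed.

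I do not expect a genuine obstacle here; the only step needing a little care is the index bookkeeping $S_n=T_{n+2}$, together with checking that the negative-index convention for $S_n$ is compatible with the tribonacci initial values, so that the terms $S_{k-i+j}$ with small or negative argument are interpreted correctly. As in the earlier remarks, one may additionally record the companion identity obtained by setting $n=k$, namely $T_{2n}=T_{n+1}^2+T_n^2+2T_nT_{n-1}$.
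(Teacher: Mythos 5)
Your proposal is correct and follows essentially the same route as the paper: the same choice of coefficients giving $\mathcal{T}=\{1,2,3\}$, the same specialization of the master identity to $S_{n+k}=S_nS_k+S_{n-1}S_{k-1}+S_{n-1}S_{k-2}+S_{n-2}S_{k-1}$, and the same reindexing $S_n=T_{n+2}$ followed by the shift $n\mapsto n-1$, $k\mapsto k-1$. Your explicit check of the initial values and the negative-index convention is a small welcome addition, but the argument is the paper's.
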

\begin{proof}
We derive recursive relation for tribonacci numbers from recursion
\ref{recursion S_n} by setting $a_{n-1}=a_{n-2}=a_{n-3}=1$ and $a_k = 0$
otherwise. This yields $\mathcal{T}=\left\lbrace 1,2,3\right\rbrace$ and
we have \begin{align*}
S_{n+k}&=S_{n}S_{k}+\sum\limits_{i=2}^{3}\sum\limits_{j=1}^{i-1}S_{n-j}S_{k-i+j}\\
&=S_{n}S_{k}+\sum\limits_{j=1}^{1}S_{n-j}S_{k+j-2}+\sum\limits_{j=1}^{2}S_{n-j}S_{k+j-3}\\
&=S_{n}S_{k}+S_{n-1}S_{k-1}+S_{n-1}S_{k-2}+S_{n-2}S_{k-1}.
\end{align*}
Because $S_n=T_{n+2}$, we shift indices by $2$ and after replacing $n$ and $k$ with $n-1$ and $k-1$, respectively,
we obtain a more elegant expression
$$T_{n+k}=T_{n+1}T_{k+1}+T_{n}T_{k}+T_{n}T_{k-1}+T_{n-1}T_{k}.$$ 
\end{proof}

Dresden and Jin already proved a number of tetranacci identities in their recent paper \cite{Dresden}. Here we provide yet another tetranacci identity.

\begin{cor}[Tetranacci numbers] The Tetranacci numbers satisfy
$$Q_{n+k}=Q_{n+2}Q_{k+1}+Q_{n+1}\left(Q_{k+2}-Q_{k+1}\right)+Q_{n}\left(Q_{k-1}+Q_k\right)+Q_{n-1}Q_{k}.$$ \end{cor}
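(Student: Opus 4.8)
The plan is to apply Theorem~\ref{generalizedtheorem} directly to the tetranacci recurrence and then translate the resulting identity into the conventional indexing. First I would set $a_{n-1}=a_{n-2}=a_{n-3}=a_{n-4}=1$ and $a_k=0$ otherwise, so that $\mathcal{T}=\{1,2,3,4\}$ and the sequence $S_n$ satisfies the tetranacci recurrence with $S_0=1$, $S_k=0$ for $k<0$. Comparing initial values with the standard tetranacci numbers $Q_0=Q_1=Q_2=0$, $Q_3=1$ shows that $S_n=Q_{n+3}$.

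Next I would expand the master identity \eqref{generalizedformula}. Since the $i=1$ term is excluded, the sum runs over $i\in\{2,3,4\}$ with coefficient $a_i=1$, giving
\begin{align*}
S_{n+k}&=S_nS_k+\sum_{j=1}^{1}S_{n-j}S_{k+j-2}+\sum_{j=1}^{2}S_{n-j}S_{k+j-3}+\sum_{j=1}^{3}S_{n-j}S_{k+j-4}\\
&=S_nS_k+S_{n-1}S_{k-1}+S_{n-1}S_{k-2}+S_{n-2}S_{k-1}+S_{n-1}S_{k-3}+S_{n-2}S_{k-2}+S_{n-3}S_{k-1}.
\end{align*}
Then I would substitute $S_m=Q_{m+3}$ throughout, yielding an identity purely in the $Q$'s with various shifted arguments, and finally replace $n$ and $k$ by suitable shifts (I expect $n\mapsto n-1$, $k\mapsto k-1$, or similar) to centre the identity around $Q_{n+k}$.

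The main obstacle — really the only nontrivial part — is the bookkeeping at the end: after the substitution one obtains a sum of seven products whose arguments are scattered, and these must be recombined using the tetranacci recurrence $Q_m=Q_{m-1}+Q_{m-2}+Q_{m-3}+Q_{m-4}$ (equivalently its forward form) to collapse pairs like $Q_{k-1}+Q_{k-2}+Q_{k-3}$ into a single term such as $Q_{k+2}-Q_{k+1}$ or $Q_k+Q_{k-1}$, matching the grouped coefficients $Q_{k+1}$, $Q_{k+2}-Q_{k+1}$, $Q_{k-1}+Q_k$, $Q_k$ that appear in the claimed statement. I would carry this out by fixing the overall shift first (so that the dominant term reads $Q_{n+2}Q_{k+1}$), then grouping the remaining six products by their first factor $Q_{n+1}$, $Q_n$, $Q_{n-1}$ and simplifying each bracket with one or two applications of the recurrence. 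Once the shift is chosen correctly the algebra is routine and the stated identity follows.
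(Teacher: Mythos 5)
Your proposal is correct and follows essentially the same route as the paper: apply Theorem~\ref{generalizedtheorem} with $\mathcal{T}=\{1,2,3,4\}$, expand the double sum into the same seven products, identify $S_m=Q_{m+3}$, shift, and collapse $Q_k+Q_{k-1}+Q_{k-2}$ via the recurrence into $Q_{k+2}-Q_{k+1}$. The only detail left implicit is that the shift forced by matching the leading term $Q_{n+2}Q_{k+1}$ is $n\mapsto n-1$, $k\mapsto k-2$ (not $k\mapsto k-1$), after which the grouping you describe yields exactly the stated identity.
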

\begin{proof}
We derive recursive relation for tetranacci numbers from recursion
\ref{recursion S_n} by setting $a_{n-1}=a_{n-2}=a_{n-3}=a_{n-4}=1$ as the
only non-zero coefficients. This yields
$\mathcal{T}=\left\lbrace 1,2,3,4\right\rbrace$ and we have
\begin{align*}
S_{n+k}&=S_{n}S_{k}+\sum\limits_{i=2}^{4}\sum\limits_{j=1}^{i-1}S_{n-j}S_{k-i+j}\\
&=S_{n}S_{k}+\sum\limits_{j=1}^{1}S_{n-j}S_{k-2+j}+\sum\limits_{j=1}^{2}S_{n-j}S_{k-3+j}+\sum\limits_{j=1}^{3}S_{n-j}S_{k-4+j}\\
&=S_{n}S_{k}+S_{n-1}S_{k-1}+\sum\limits_{j=1}^{2}S_{n-j}S_{k-3+j}+\sum\limits_{j=1}^{3}S_{n-j}S_{k-4+j}\\
&=S_{n}S_{k}+S_{n-1}\left(S_{k-1}+S_{k-2}+S_{k-3}\right)+S_{n-2}S_{k-1}+S_{n-2}S_{k-2}+S_{n-3}S_{k-1}\\
&=S_{n}S_{k}+S_{n-1}\left(S_{k+1}-S_{k}\right)+S_{n-2}S_{k-1}+S_{n-2}S_{k-2}+S_{n-3}S_{k-1}.
\end{align*}
By shifting indices by $3$ and replacing $n$ with $n-1$ and $k$ with $k-2$
we arrive at
$$Q_{n+k}=Q_{n+2}Q_{k+1}+Q_{n+1}\left(Q_{k+2}-Q_{k+1}\right)+Q_{n}\left(Q_{k-1}+Q_k\right)+Q_{n-1}Q_{k}.$$ 
\end{proof}

\begin{remark} For tribonacci and tetranacci numbers we have following identities: \begin{align*}
T_{2n}&=T_{n+1}^2+T_n^2+2T_{n}T_{n-1},\\
T_{2n+1}&=T_{n}^2-2T_{n+1}^2+2T_{n+1}T_{n+2},\\
Q_{2n}&=2Q_{n+2}Q_{n+1}-Q_{n+1}^2+Q_n^2+2Q_nQ_{n-1},\\
Q_{2n+1}&=Q_{n+2}^2-Q_{n+1}^2+Q_{n}^2+2Q_{n+1}\left(Q_{n+3}-Q_{n+2}\right).
\end{align*}
\end{remark}

Our last example involves a full-history recursion. The result can be also
obtained from Corollary \ref{fibonacci}, but here we want to present an
application of Theorem \ref{generalizedtheorem} in case where
$\mathcal{T}=\mathbb{N}$. We tile a rectangular strip of length $n$ with
tiles of all possible sizes that vary from $1$ to $n$ where each tile of
length $k$ admits $k$ colors. So, let $a_k=k$ for $k\geq 0$. 
We obtain recursive relation
$$S_n=1\cdot S_{n-1}+2\cdot S_{n-2}+\cdots +n\cdot S_0=\sum\limits_{i=0}^{n-1}(n-i) S_{i}.$$
Thus we have $\mathcal{T}=\mathbb{N}$. It is not hard to see that $S_1=1$,
$S_2=3$, $S_3=8$ and in general $S_n=F_{2n}$, where $F_n$, as before,
denotes $n$-th Fibonacci number. By Theorem \ref{generalizedtheorem} we have

$$S_{n+k}=S_{n}S_{k}+\sum\limits_{i=2}^{\infty}i\sum\limits_{j=1}^{i-1}S_{n-j}S_{k-i+j},$$
and since $S_k=0$ for $k<0$, we can restrict our range of summation. Hence,

\begin{align*}
S_{n+k}&=S_{n}S_{k}+\sum\limits_{i=2}^{n+k}i\sum\limits_{j=1}^{i-1}S_{n-j}S_{k-i+j}\\
&=S_{n}S_{k}+\sum\limits_{j=1}^{n+k-1}S_{n-j}\sum\limits_{i=j+1}^{n+k}iS_{k-i+j}\\
&=S_{n}S_{k}+\sum\limits_{j=1}^{n}S_{n-j}\sum\limits_{i=j+1}^{j+k}iS_{k-i+j}.\\
\end{align*}
Since \begin{align*}
\sum\limits_{j=1}^{n}S_{n-j}\sum\limits_{i=j+1}^{j+k}iS_{k-i+j}&=\sum\limits_{j=1}^{n}S_{n-j}\left((j+1)S_{k-1}+\cdots+(j+k)S_{0}\right)\\
&=\sum\limits_{j=1}^{n}S_{n-j}\left(j\left(S_{k-1}+\cdots+S_{0}\right)+1\cdot S_{k-1}+\cdots+k\cdot S_{0}\right)\\
&=\sum\limits_{j=1}^{n}S_{n-j}\left(j\sum\limits_{j=0}^{k-1}S_{j}+S_{k}\right)\\
&=S_{n}\sum\limits_{j=0}^{k-1}S_{j}+S_{k}\sum\limits_{j=0}^{n-1}S_{j}\\
&=S_{n}\sum\limits_{j=1}^{k-1}S_{j}+S_{k}\sum\limits_{j=1}^{n-1}S_{j}+S_{n}+S_{k},\\
\end{align*} 
we finally have  \begin{align*}S_{n+k}=S_nS_k+S_{n}\sum\limits_{j=1}^{k-1}S_{j}+S_{k}\sum\limits_{j=1}^{n-1}S_{j}+S_{n}+S_{k}.
\end{align*} 
All indices above are greater than $0$, so we can replace $S_{n}$ with $F_{2n}$
and, after using identity $\sum\limits_{j=1}^n F_{2j}=F_{2n+1}-1$,  we obtain
\begin{align*}
F_{2n+2k}&=F_{2n}F_{2k}+F_{2n}\sum\limits_{j=1}^{k-1}F_{2j}+F_{2k}\sum\limits_{j=1}^{n-1}F_{2j}+F_{2n}+F_{2k}\\
&=F_{2n}F_{2k}+F_{2n}\left(F_{2k-1}-1\right)+F_{2k}\left(F_{2n-1}-1\right)+F_{2n}+F_{2k}\\
&=F_{2n}F_{2k}+F_{2n}F_{2k-1}+F_{2k}F_{2n-1}\\
&=F_{2n}F_{2k+1}+F_{2k}F_{2n-1}.
\end{align*} 
Finally, and after setting $n=k$,
 \begin{align*}
F_{4n}&=F_{2n}\left(F_{2n+1}+F_{2n-1}\right)\\
&=F_{n}(F_{n+1}+F_{n-1})\left(F_{n+1}^2+2F_n^2+F_{n-1}^2\right).
\end{align*}

\section{Concluding remarks}

In this paper we have considered sequences defined by full-history homogeneous
linear recurrences with non-negative constant integer coefficients. Such 
sequences encompass in a unifying frame several known generalizations of
Fibonacci numbers, including, among others, the Horadam sequences and the
$m$-nacci numbers. A master identity is established for the generalized
sequences and a number of identities for many combinatorially interesting
sequences are obtained as its corollaries. We mention that our approach
could be successfully employed to derive further identities by considering
several breaking points, leading thus to results for sequences indexed by
sums of three and more indices.

\nocite{*}
\bibliographystyle{amsplain}
\bibliography{}

\end{document}